\newtheorem{thm}{Theorem}
\newtheorem{prop}{Proposition}
\newtheorem*{rem}{Remark}
\newtheorem*{defi}{Definition}
\begin{document}

    \title{What about $A,B$ if $AB-BA$ and $A$ commute.}
    \author{Gerald BOURGEOIS}
    
    \date{16-03-2011}
    \address{G\'erald Bourgeois, GAATI, Universit\'e de la polyn\'esie fran\c caise, BP 6570, 98702 FAA'A, Tahiti, Polyn\'esie Fran\c caise.}
    \email{gerald.bourgeois@upf.pf}
        
  \subjclass[2010]{Primary 15A27, 15A22}
    \keywords{Nilpotent matrix, property L}

\begin{abstract}
Let $A,B$ be complex $n\times{n}$ complex matrices such that $AB-BA$ and $A$ commute. We show that, if $n=2$ then $A,B$ are simultaneously triangularizable and if $n\geq{3}$ then there exists such a couple $A,B$ such that the pair $(A,B)$ has not property L of Motzkin-Taussky and such that $B$ and $C$ are not simultaneously triangularizable.
\end{abstract}

\maketitle
\textbf{Notations.}
$i)$ If $U$ is a square matrix, then $\sigma(U)$ denotes the spectrum of $U$.\\
$ii)$ Let $A,B$ be complex $n\times{n}$ complex matrices. If there exists an invertible matrix $P$ such that $P^{-1}AP$ and $P^{-1}BP$ are upper triangular then we say that $A$ and $B$ are $ST$.\\
$iii)$ Denote by $I_n$ and $0_n$ the identity matrix and the zero matrix of dimension $n$.
\begin{defi}
(See \cite{3}). A pair $(A,B)$ of complex $n\times{n}$ matrices is said to have property L if for a special ordering of the eigenvalues $(\lambda_i)_{i\leq{n}},(\mu_i)_{i\leq{n}}$ of $A,B$, the eigenvalues of $xA+yB$ are $(x\lambda_i+y\mu_i)_{i\leq{n}}$, for all values of the complex numbers $x,y$. 
\end{defi}
\begin{rem}
If $A,B$ are $ST$ then $(A,B)$ has property L, but the converse is false (see \cite{3}).
\end{rem}
We deal with the couples $(A,B)$ such that $AB-BA$ and $A$ commute. If $(A,B)$ is such a couple , then for every complex numbers $\lambda,\mu$, $(A+\lambda{I}_n,B+\mu{I}_n)$ is another one. Then we may assume that $A$ and $B$ are invertible matrices, or on the contrary, that they are singular. In the sequel, we put $C=AB-BA$. \\
Several well-known results are gathered in the following Proposition. 
\begin{prop}
Let $A,B$ be complex $n\times{n}$ matrices. We assume that $C$ and $A$ commute. Then $C$ is a nilpotent matrix and the pair $(B,C)$ has property L of Motzkin-Taussky.  Moreover, if $A,B$ are invertible, then $A^{-1}B^{-1}C,B^{-1}A^{-1}C$ and $B^{-1}C$ are nilpotent matrices.
\end{prop}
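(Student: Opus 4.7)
The plan is to address the three assertions in turn.

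\textbf{$C$ is nilpotent.} I would invoke Newton's identities after showing $\mathrm{tr}(C^k)=0$ for every $k\geq 1$. Using $AC=CA$ to commute $A$ past $C^{k-1}$ and then the cyclic property of the trace,
$$\mathrm{tr}(C^k)=\mathrm{tr}(ABC^{k-1})-\mathrm{tr}(BAC^{k-1})=\mathrm{tr}(ABC^{k-1})-\mathrm{tr}(C^{k-1}AB)=0.$$
Hence $\sigma(C)=\{0\}$.

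\textbf{Property L of $(B,C)$.} Since $[A,[A,B]]=0$, the Baker--Campbell--Hausdorff expansion truncates after two terms:
$$e^{sA}\,B\,e^{-sA}=B+s[A,B]+\tfrac{s^2}{2}[A,[A,B]]+\cdots=B+sC\qquad(s\in\mathbb{C}).$$
Therefore $B+sC$ is similar to $B$ and shares a spectrum $(\mu_i)$. For $x\neq 0$ the eigenvalues of $xB+yC=x\bigl(B+(y/x)C\bigr)$ are $(x\mu_i)$, and for $x=0$ they are all $0$ because $C$ is nilpotent. In both cases the eigenvalues are exactly $x\mu_i+y\cdot 0$, giving property L.

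\textbf{Invertible case.} A short induction using $AC=CA$ yields $A^kBA^{-k}=B+kA^{-1}C$ for every $k\in\mathbb{N}$, so $\det(B+tA^{-1}C)$ is a polynomial in $t$ that takes the value $\det(B)$ on an infinite set, hence is identically $\det(B)$. Multiplying by $\det(A)$ gives $\det(BA+tC)=\det(BA)$, i.e.\ $\det(I_n+tA^{-1}B^{-1}C)=1$ for every $t\in\mathbb{C}$, which forces $A^{-1}B^{-1}C$ to be nilpotent. The analogous identity $\det(I_n+tB^{-1}C)=\det(B^{-1})\det(B+tC)=1$ handles $B^{-1}C$. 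Finally $B^{-1}A^{-1}BA=(A^{-1}B^{-1}AB)^{-1}=(I_n+A^{-1}B^{-1}C)^{-1}$ is unipotent, so $B^{-1}A^{-1}C=I_n-B^{-1}A^{-1}BA$ has only the eigenvalue $0$.

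The sole delicate step I expect is the inductive identity $A^kBA^{-k}=B+kA^{-1}C$, which amounts to sliding $A$ past $C$ using $AC=CA$ at each stage; once it is in hand, everything else reduces to elementary determinant arithmetic and the polynomial-identity principle (a polynomial that agrees with a constant on infinitely many points is that constant).
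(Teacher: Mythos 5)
Your proof is correct, and it is genuinely more self-contained than the paper's, which at each of the three steps leans on an external citation: Kleinecke's theorem for the nilpotency of $C$, Campbell's identity $e^{tA}Be^{-tA}=B+tC$ followed by a ``continuity argument'' for property L, and the Putnam--Wintner theorem (that $ABA^{-1}B^{-1}-I_n$ and $CB^{-1}$ are nilpotent) for the final assertion. You replace the first citation by the standard computation $\mathrm{tr}(C^k)=0$ plus Newton's identities, which is exactly the finite-dimensional shortcut that Kleinecke's operator-theoretic argument generalizes. Your property L argument shares the key identity $e^{sA}Be^{-sA}=B+sC$ with the paper but finishes with the clean case split $x\neq 0$ versus $x=0$ (the latter handled by the nilpotency of $C$), avoiding the paper's vaguer appeal to continuity. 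For the invertible case, your identity $A^kBA^{-k}=B+kA^{-1}C$ together with the polynomial-identity principle is in effect a direct proof of the special case of Putnam--Wintner that is needed, and deducing the nilpotency of $B^{-1}A^{-1}C=I_n-B^{-1}A^{-1}BA$ from the unipotence of $(I_n+A^{-1}B^{-1}C)^{-1}$ is a nice alternative to the paper's route, which instead uses $\sigma(XY)=\sigma(YX)$ to shuttle nilpotency between $A^{-1}CB^{-1}$, $B^{-1}A^{-1}C$ and $A^{-1}B^{-1}C$. What the paper's route buys is brevity; what yours buys is a proof readable without the three references, at the cost of a few determinant computations, all of which check out.
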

\begin {proof}
$C$ is nilpotent by vertue of \cite{1}. According to \cite{2}, one has, for every real $t$, $e^{tA}Be^{-tA}=B+tC$ and therefore $\sigma(B+tC)=\sigma(B)$. Reasoning by a continuity argument, we can conclude that the pair $(B,C)$ has property L.  \\
Now we assume that $A,B$ are invertible. One has $A^{-1}CB^{-1}=CA^{-1}B^{-1}=ABA^{-1}B^{-1}-I_n$. In \cite [Theorem 2]{4}, it is shown that  $ABA^{-1}B^{-1}-I_n$ is a nilpotent matrix. Since $\sigma(A^{-1}B^{-1}C)=
\sigma(CA^{-1}B^{-1})=\{0\}$ and $\sigma(B^{-1}A^{-1}C)=\sigma(A^{-1}CB^{-1})=\{0\}$, we conclude that 
$A^{-1}B^{-1}C$ and $B^{-1}A^{-1}C$ are also nilpotent matrices. Finally the fact that $CB^{-1}$ is nilpotent (or equivalently $B^{-1}C$ is nilpotent) is also proven in \cite{4} (see the proof of theorem 1).
\end{proof}
There are strong relations on the one hand between $A$ and $C$ and on the other hand between $B$ and $C$.
We may wonder whether $A$ and $B$ are simultaneously triangularizable or, at least, the pair $(A,B)$ has property L. We have a positive answer in the following case.
\begin{defi}
A complex matrix $A$ is said to be \emph{non-derogatory} if for all $\lambda\in\sigma(A)$, the number of Jordan blocks of $A$ associated with $\lambda$ is $1$.
\end{defi}
\begin{prop}
If $A$ is a non-derogatory matrix and if $AC=CA$, then $A$ and $B$ are $ST$.
\end{prop}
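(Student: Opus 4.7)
The plan is to use induction on $n$, with the key step being the exhibition of a common eigenvector of $A$ and $B$; once that is in hand, a quotient argument closes the induction.

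Since $A$ is non-derogatory, its commutant is $\mathbb{C}[A]$, so the hypothesis $AC=CA$ yields a polynomial $q$ with $C=q(A)$. By the previous Proposition, $C$ is nilpotent, and since $\sigma(q(A))=q(\sigma(A))$, this forces $q(\lambda)=0$ for every $\lambda\in\sigma(A)$. Now for any $\lambda\in\sigma(A)$, the space $\ker(A-\lambda I_n)$ is one-dimensional (again by non-derogatoriness); let $v$ span it. Then
$$A(Bv)=BAv+Cv=\lambda Bv+q(A)v=\lambda Bv+q(\lambda)v=\lambda Bv,$$
so $Bv\in\mathbb{C}v$, and $v$ is a common eigenvector of $A$ and $B$.

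To close the induction I pass to the quotient $\mathbb{C}^n/\mathbb{C}v$ with induced operators $\bar A,\bar B$. The relations $\bar A\bar B-\bar B\bar A=\bar C$ and $\bar A\bar C=\bar C\bar A$ descend automatically, so the induction hypothesis applies provided that $\bar A$ is still non-derogatory; this is the one technical point where care is needed and is the main obstacle to watch out for. It is, however, immediate from the Jordan form of $A$: the vector $v$ is, up to scalar, the unique eigenvector of the single Jordan block $J_k(\lambda)$ it belongs to, so quotienting by $\mathbb{C}v$ shortens this block to $J_{k-1}(\lambda)$ (or removes it if $k=1$) and leaves the other blocks unchanged, keeping exactly one Jordan block per eigenvalue. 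The inductive hypothesis then gives a basis of $\mathbb{C}^n/\mathbb{C}v$ simultaneously triangularizing $\bar A$ and $\bar B$; prepending $v$ yields the desired basis for $A$ and $B$, and the base case $n=1$ is trivial.
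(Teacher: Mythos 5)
Your proof is correct, but it takes a genuinely different route from the paper. The paper's argument is two lines: since $A$ is non-derogatory and $C$ commutes with $A$, the commutant of $A$ is $\mathbb{C}[A]$, so $C$ is a polynomial in $A$; it then invokes Theorem 1 of the author's earlier paper on the equation $XA-AX=f(X)$ as a black box to conclude that $A$ and $B$ are $ST$. You share the first observation ($C=q(A)$) but then, instead of citing, you give a self-contained induction: the nilpotency of $C$ (Kleinecke, i.e.\ the paper's Proposition 1) forces $q$ to vanish on $\sigma(A)$, the one-dimensionality of $\ker(A-\lambda I_n)$ turns the identity $A(Bv)=\lambda Bv+q(\lambda)v$ into a common eigenvector, and the quotient step is legitimate because killing the unique eigenvector inside a Jordan block $J_k(\lambda)$ leaves one block per eigenvalue, so $\bar A$ stays non-derogatory --- you correctly flag and dispose of the one delicate point. (For completeness: $\mathbb{C}v$ is $C$-invariant since $C=q(A)$, so $\bar C$ is indeed the induced operator and the hypotheses descend.) What your approach buys is a short, elementary, fully verifiable proof of exactly the special case of the cited theorem that is needed here, at the cost of a little length; what the paper's approach buys is brevity and a pointer to a more general statement covering $XA-AX=f(X)$ for more general $f$.
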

\begin{proof}
Necessarily, $C$ is a polynomial in $A$. According to \cite [Theorem 1] {5}, $A$ and $B$ are $ST$.
\end{proof}
\begin{rem}
$i)$ Note that the set of derogatory matrices is included in the set $NS$ of non-separable matrices (they have at least one multiple eigenvalue). $NS$ is an algebraic variety in $\mathcal{M}_n(\mathbb{C})$ of codimension $1$ and therefore is a null set in the sense of Lebesgue measure (see \cite{6} for an outline of the proof).\\   
$ii)$ If we fix the matrix $A$, then the equation $A(AB-BA)=(AB-BA)A$ is linear in the unknown $B$. More precisely $B\in{k}er(\phi)$ where $\phi:X\rightarrow{A}^2X+XA^2-2AXA$. Therefore $\phi=A^2\otimes{I}+I\otimes{(A^T)^2}-2A\otimes{A}^T=\psi^2$ where $\psi=A\otimes{I}_n-I_n\otimes{A}^T$. Thus, if $\sigma(A)=(\lambda_i)_i$, then $\sigma(\psi)=(\lambda_i-\lambda_j)_{i,j}$ and $\sigma(\phi)=((\lambda_i-\lambda_j)^2)_{i,j}$. We deduce that the expression $i(A)=\dfrac{dim(ker(\psi^2))-dim(ker(\psi)}{n^2}$ is linked to the existence of $B$ such that $AB-BA$ and $A$ commute and such that $A,B$ are not $ST$.
\end{rem}
 Now we prove our main result.
\begin{prop}
i) If $n=2$ and $CA=AC$, then $A$ and $B$ are  $ST$.\\ 
ii) If $n\geq{3}$ then there exists a couple $A,B$ such that $AB-BA$ and $A$ commute, satisfying\\
$\bullet$ the pair $(A,B)$ has not property L.\\
$\bullet$ $B$ and $C$ are not $ST$.
\end{prop}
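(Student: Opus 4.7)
For part (i), the plan is to exploit that $C=AB-BA$ is nilpotent (Proposition 1 above) together with the fact that in dimension $2$ a nonzero nilpotent matrix satisfies $C^{2}=0$ and has one-dimensional kernel equal to its image. If $C=0$ then $A$ and $B$ commute and are trivially $ST$, so assume $C\neq 0$ and set $V:=\operatorname{Im}(C)=\ker(C)$. The relation $AC=CA$ forces $V$ to be $A$-invariant, so $Av=\alpha v$ for some generator $v$ of $V$ and some $\alpha\in\mathbb{C}$. Evaluating $Cv=ABv-BAv$ at $v$ and using $Cv=0$ yields $A(Bv)=\alpha(Bv)$. Since $A\neq \alpha I_{2}$ (otherwise $C=0$), the $\alpha$-eigenspace of $A$ is one-dimensional, so it equals $V$. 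Hence $Bv\in V$, giving a common invariant line, whence $A$ and $B$ are $ST$.

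For part (ii), I would produce an explicit counterexample for $n=3$ and then pad with zeros for $n\geq 4$. The condition $[A,[A,B]]=0$ is linear in $B$ (cf.\ the Remark after Proposition 2), so I would fix a simple nilpotent $A$ such as
\[
A=\begin{pmatrix}0&1&0\\0&0&0\\0&0&0\end{pmatrix},
\]
solve this linear condition for the admissible $B$, and within the solution space look for a $B$ making $(A,B)$ fail property L and $(B,C)$ fail simultaneous triangularizability. The candidate
\[
B=\begin{pmatrix}0&0&0\\0&0&1\\1&0&0\end{pmatrix}
\]
should work: $A$ and $B$ are nilpotent, $C=AB-BA$ satisfies $AC=CA=0$, the characteristic polynomial of $xA+yB$ is $\lambda^{3}-xy^{2}$, and a direct calculation gives $[B,C]=\operatorname{diag}(-1,-1,2)$.

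From these computations the two negative conclusions follow at once. Property L would force $xA+yB$ to be nilpotent for all $x,y$ (since both $A$ and $B$ are), but $\lambda^{3}-xy^{2}$ has three distinct nonzero roots when $xy\neq 0$. Simultaneous triangularizability of $B$ and $C$ would make $[B,C]$ strictly upper triangular in a common basis, hence nilpotent, contradicting $\operatorname{diag}(-1,-1,2)$ (this is the quickest way to invoke McCoy's criterion). For $n\geq 4$, the block-diagonal extension $A\oplus 0_{n-3}$, $B\oplus 0_{n-3}$ preserves the commutation hypothesis, appends only zeros to the spectrum of $xA+yB$, and leaves the commutator $[B,C]\oplus 0_{n-3}$ non-nilpotent, so both obstructions persist.

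The main obstacle is part (ii): nothing a priori tells one which nilpotent $A$ and which $B$ in the kernel of $\psi^{2}$ will simultaneously spoil property L for $(A,B)$ and simultaneous triangularizability for $(B,C)$. Once the explicit pair is written down, every verification reduces to a handful of $3\times 3$ matrix multiplications, and the whole novelty of the statement is condensed into the choice of this particular $B$.
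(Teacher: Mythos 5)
Your proposal is correct. For part (ii) you have in fact reconstructed exactly the paper's counterexample: the same $A_0$ and the same $B$, the same characteristic polynomial $\lambda^3-xy^2$ ruling out property L (since $A$ and $B$ are both nilpotent, property L would force $xA+yB$ to be nilpotent for all $x,y$). The only cosmetic difference is the certificate for the non-triangularizability of $(B,C)$: the paper computes $\operatorname{Trace}(B^2C^2)=-1$, which kills ST because $C$ is nilpotent and would have zero diagonal in a triangularizing basis, whereas you compute $[B,C]=\operatorname{diag}(-1,-1,2)$ and observe it is not nilpotent. Both are valid instances of the McCoy obstruction; yours is arguably the more transparent one. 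Your padding argument for $n\geq 4$ is the same as the paper's.

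Where you genuinely diverge is part (i). The paper disposes of $n=2$ in one line by invoking its Proposition on non-derogatory matrices (which rests on the external result of \cite{5} that $C$ being a polynomial in $A$ forces $A,B$ to be $ST$), together with the observation that a derogatory $2\times2$ matrix is scalar. You instead give a self-contained elementary argument: $C$ is nilpotent, so in dimension $2$ either $C=0$ (trivial case) or $\operatorname{Im}(C)=\ker(C)$ is a line $V$; the hypothesis $AC=CA$ makes $V$ an $A$-invariant line with eigenvalue $\alpha$, and since $v\in\ker(C)$ gives $A(Bv)=\alpha(Bv)$ while the $\alpha$-eigenspace of $A$ is exactly $V$ (as $A\neq\alpha I_2$), the line $V$ is also $B$-invariant, yielding a common flag. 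This is correct and buys independence from \cite{5}, at the cost of being specific to $n=2$ (the paper's route via non-derogatory matrices applies verbatim in all dimensions whenever $A$ is non-derogatory, which is the content of its Proposition 2). Both arguments are sound; yours is the more elementary and self-contained one for the two-dimensional case.
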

\begin{proof}
$i)$ According to a previous remark, we may assume that $A$ is derogatory, that is a scalar matrix, and we conclude immediately. \\
$ii)$ It is sufficient to find such a counterexample $(A_0,B_0)$ when $n=3$. Indeed, if $n>3$, then consider the couple $(A_0\bigoplus{0}_{n-3},B_0\bigoplus{0}_{n-3})$.\\
Now we suppose $n=3$ and $A_0$ is chosen as a derogatory matrix, for instance $A_0=\begin{pmatrix}0&1&0\\0&0&0\\0&0&0\end{pmatrix}$. Here $\psi$ is nilpotent, $dim(ker(\psi))=5$, $dim(ker(\psi^2))=8$ and $i(A_0)=\dfrac{1}{3}$. We search associated matrices $B$. Amongst numerous solutions, we choose this one\\ $$B=\begin{pmatrix}0&0&0\\0&0&1\\1&0&0\end{pmatrix}.$$
$\bullet$ $(A_0,B)$ has not property L because $\sigma(A_0)=\{0\}$ and for every couple of complex numbers $(t,x)$, $\chi_{tA_0+B}(x)=x^3-t$.\\
$\bullet$ We observe that $Trace(B^2C^2)=-1$, that implies that $B$ and $C$ are not $ST$.
\end{proof}
To show that two complex matrices are $ST$, the McCoy Theorem (see \cite{8}) contains no finite verification procedure. The following test admits a finite one (see \cite[Theorem 6]{7}).
 \begin{thm} \label{test} Two $n\times{n}$ complex matrices $A$ and $B$ are $ST$ if and only if\\ 
for every $k\in[[1,n^2-1]]$, each matrix in the form $U_1\cdots{U}_k(AB-BA)$ (where, for every $i$, $U_i$ is $A$ or $B$) has a zero trace.
\end{thm}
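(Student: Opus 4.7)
The plan is to derive this as a finitary version of McCoy's theorem, using a dimension count on the unital subalgebra of $M_n(\mathbb{C})$ generated by $A$ and $B$. The forward implication is standard: if $P^{-1}AP$ and $P^{-1}BP$ are upper triangular, then $P^{-1}(AB-BA)P$ is strictly upper triangular, so $P^{-1}U_1\cdots U_k(AB-BA)P$ remains strictly upper triangular, and its trace vanishes for every admissible word.

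For the converse, I would invoke McCoy's theorem (the cited reference \cite{8}) in the form: $A$ and $B$ are $ST$ if and only if $\mathrm{tr}(W(AB-BA))=0$ for \emph{every} word $W$ in the non-commuting letters $A,B$, including the empty word. The task is then to reduce this infinite family to the range $1\leq k\leq n^2-1$. For each $k\geq 0$, let $V_k$ be the $\mathbb{C}$-linear span of all products $U_1\cdots U_j$ with $U_i\in\{A,B\}$ and $0\leq j\leq k$, the empty product being $I_n$. Then $V_0\subseteq V_1\subseteq\cdots$ is an ascending chain inside $M_n(\mathbb{C})$ with $\dim V_0=1$ and $\dim V_k\leq n^2$ throughout. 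Since the inclusions cannot be strict at every step from $V_0$ to $V_{n^2-1}$, there exists $k^*\leq n^2-1$ with $V_{k^*}=V_{k^*+1}$; left multiplication by $A$ or $B$ sends $V_k$ into $V_{k+1}$, so this equality propagates to $V_{k^*}=V_j$ for every $j\geq k^*$, and every word in $A,B$ of any length lies in $V_{n^2-1}$.

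Combining these two ingredients, any word $W$ in $A,B$ can be written as $cI_n$ plus a linear combination of words of length between $1$ and $n^2-1$, and hence $\mathrm{tr}(W(AB-BA))$ is a linear combination of $\mathrm{tr}(AB-BA)=0$ and the trace expressions assumed to vanish. McCoy's criterion then yields that $A$ and $B$ are $ST$. The essential point, and the one requiring care, is the dimension stabilization argument; the bound $n^2-1$ rather than $n^2$ comes from the fact that $V_0$ already has dimension one, leaving room for at most $n^2-1$ further strict increments before the chain saturates $M_n(\mathbb{C})$.
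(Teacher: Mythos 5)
The paper does not actually prove this theorem: it is imported verbatim from Alp'in--Koreshkov \cite[Theorem 6]{7}, so there is no in-paper argument to compare yours against. Taken on its own terms, your proof is essentially correct, and its core --- the stabilization of the chain $V_0\subseteq V_1\subseteq\cdots$ of spans of words, which forces every word in $A,B$ into $V_{n^2-1}$ because $\dim V_0=1$ and each strict inclusion costs at least one dimension inside $M_n(\mathbb{C})$ --- is exactly the right mechanism for turning the infinite trace criterion into a finite one; the propagation of $V_{k^*}=V_{k^*+1}$ to all later indices via left multiplication by $A$ and $B$ is correctly handled, and the forward direction is routine as you say. The one step you should not leave as a bare citation is your starting point: McCoy's theorem as stated in \cite{8} is the nilpotency form ($p(A,B)(AB-BA)$ nilpotent for every polynomial $p$ in noncommuting variables), not the trace form you invoke. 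The two are equivalent, but the direction you need --- vanishing of $\operatorname{tr}\bigl(W(AB-BA)\bigr)$ for all words $W$ implies $ST$ --- requires a short additional argument: the hypothesis says $AB-BA$ lies in the radical of the trace form on the unital algebra $\mathcal{A}$ generated by $A$ and $B$; that radical is a two-sided ideal each of whose elements $Y$ satisfies $\operatorname{tr}(Y^k)=0$ for all $k\geq 1$ and hence is nilpotent, so it is contained in $\operatorname{rad}(\mathcal{A})$; therefore $\mathcal{A}/\operatorname{rad}(\mathcal{A})$ is commutative, its irreducible modules are one-dimensional, and a composition series of $\mathbb{C}^n$ yields a triangularizing flag. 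With that lemma supplied (or with a precise reference for the trace form of McCoy's criterion), your reduction is a complete and self-contained proof of the theorem.
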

\begin{prop}
If $n\geq{4}$, then there exist derogatory matrices $A_1$ such that $A_1$ and each associated matrix $B$ are $ST$.
\end{prop}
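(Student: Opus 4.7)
The plan is to exhibit a block-diagonal derogatory $A_1$ whose structure decouples the equation $\psi^2(B)=0$ into a non-derogatory sub-problem handled by Proposition~3, plus a free block that can be triangularized independently.

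First I fix an integer $k$ with $2\leq k\leq n-2$ (possible since $n\geq 4$; one may take $k=2$), a non-derogatory complex $(n-k)\times(n-k)$ matrix $A_0$ (for instance the nilpotent Jordan block of size $n-k$), and a scalar $\lambda\notin\sigma(A_0)$. Set $A_1=A_0\oplus\lambda I_k$. Because $\lambda$ then appears with $k\geq 2$ Jordan blocks inside $A_1$, the matrix $A_1$ is derogatory.

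Next, let $B$ be any associated matrix, i.e.\ one satisfying $\psi^2(B)=0$, and write
\[
B=\begin{pmatrix}B_{11}&B_{12}\\B_{21}&B_{22}\end{pmatrix}
\]
in the block decomposition matching $A_1$. A direct block computation gives
\[
C=A_1B-BA_1=\begin{pmatrix}[A_0,B_{11}] & (A_0-\lambda I)B_{12}\\ -B_{21}(A_0-\lambda I) & 0\end{pmatrix},
\]
after which $A_1C=CA_1$ splits into the three conditions
\[
[A_0,[A_0,B_{11}]]=0,\quad (A_0-\lambda I)^2B_{12}=0,\quad B_{21}(A_0-\lambda I)^2=0.
\]
Because $\lambda\notin\sigma(A_0)$, $(A_0-\lambda I)^2$ is invertible, forcing $B_{12}=0$ and $B_{21}=0$; hence $B=B_{11}\oplus B_{22}$.

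Then I invoke Proposition~3 on $(A_0,B_{11})$: since $A_0$ is non-derogatory and commutes with $[A_0,B_{11}]$, there exists an invertible $P_1$ that simultaneously upper-triangularizes $A_0$ and $B_{11}$. Picking any invertible $P_2$ upper-triangularizing $B_{22}$ leaves $\lambda I_k$ unchanged. Conjugating $A_1$ and $B$ by $P=P_1\oplus P_2$ then produces two upper triangular matrices, establishing that $A_1$ and $B$ are $ST$.

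The delicate step is the block computation showing that $\psi^2(B)=0$ forces $B$ to be block-diagonal; once this spectral-separation argument is in place via $\lambda\notin\sigma(A_0)$, the rest reduces cleanly to Proposition~3 together with the trivial triangularization of a free $k\times k$ block.
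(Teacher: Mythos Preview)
Your argument is correct and is genuinely different from the paper's. The paper takes $n=4$, chooses $A_1=\operatorname{diag}(J_2,J_2)$ (a single eigenvalue, so no spectral separation is available), determines the $12$-parameter family of associated $B$'s explicitly, and then verifies simultaneous triangularizability by applying the Alp'in--Koreshkov trace test of Theorem~\ref{test} to all $2^{16}-2$ words with the help of Maple. Your route instead builds $A_1=A_0\oplus\lambda I_k$ with $\lambda\notin\sigma(A_0)$ and $A_0$ non-derogatory; the spectral gap forces any associated $B$ to be block-diagonal, and the problem collapses to the already-known non-derogatory case plus a free block. Your approach is conceptual, computer-free, and handles every $n\geq 4$ uniformly; the paper's approach, by contrast, exhibits a derogatory $A_1$ with a \emph{single} eigenvalue (a more degenerate situation where your decoupling trick is unavailable) and ties the example to the invariant $i(A_1)=\tfrac14<i(A_0)$ discussed in the preceding remark. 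One cosmetic point: the result you invoke about non-derogatory $A$ is Proposition~2 in the paper's numbering, not Proposition~3.
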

\begin{proof}
 We take $n=4$ and $A_1=diag(J_2,J_2)$ where $J_2$ is the Jordan nilpotent block of dimension $2$. Here $dim(ker(\psi))=8$, $dim(ker(\psi^2))=12$ and $i(A_1)=\dfrac{1}{4}<i(A_0)$. The associated matrices $B$ are in the form $B=\begin{pmatrix}*&*&*&*\\0&*&0&*\\**&*&*&*\\0&*&0&*\end{pmatrix}$ where each $*$ represents an arbitrary complex entry. 
Using Theorem \ref{test}, we verify (with Maple software) that the $65534$ considered matrices have a zero trace. Thus $A_1$ and $B$ are $ST$.
\end{proof}

\textbf{Acknowledgments}\\
The author thanks professor R. Horn for bringing this problem to his attention.

\bibliographystyle{plain}

\end{document}